\newtheorem{theorem}{Theorem}
\newtheorem{lemma}{Lemma}
\newtheorem{definition}{Definition}
\newtheorem{corollary}{Corollary}
\newtheorem{proposition}{Proposition}
\theoremstyle{definition}
\newtheorem{remark}{Remark}
\newtheorem{example}{Example}
\def \mb{\mathbb}
\def \bf{\mathbf}
\def \R{\mb R}                 
\def \a{\alpha}         
\def \b{\beta}           
\def \D{\Delta}         
\def \vp{\varphi}       
\def \th{\theta}       
\newcommand {\rank} {\text{rank}}
\newcommand {\diag} {\text{diag}}
\def \S{\mb S}        
\def\v{{\bf v}}
\def\m{{\bf m}}
\def\x{{\bf x}}
\newcommand {\q} {\mathbf{q}}
\newcommand {\p} {\mathbf{p}}
\def \and{\mbox{and}}
\title{Dziobek equilibrium configurations  on a Sphere }
\begin{document}
\maketitle
\markboth{Shuqiang Zhu}{Dziobek equilibrium configurations  on a sphere}
\author{\begin{center}
 {\textbf Shuqiang Zhu  \footnote{Supported by NSFC(No.11801537).}    }\\
\bigskip
{\footnotesize 
School of Economic and Mathematics,  Southwestern University of Finance and Economics, \\
Chengdu 611130, China\\
 zhusq@swufe.edu.cn 
}
\end{center}

 \begin{abstract}
 We  investigate  the n-body problem  on a sphere with a general interaction potential that depends on the mutual distances. We focus on the equilibrium configurations, especially on the Dziobek equilibrium configurations, which is an analogy of Dziobek central configurations of the classical  n-body problem. We obtain a criterion  and then  reduce it to two sets of equations. Then we apply  these equations to the curved n-body problem in $\S^3$.  In the end, we find the derivative of the  Cayley-Menger determinant. 

 \end{abstract}  
     \vspace{2mm}
   
      \textbf{Key Words:} curved $n$-body problem; Dziobek configurations; equilibrium configurations; stability;  Cayley-Menger determinant. 
           \vspace{8mm}
\section{introduction}
 
 The classical n-body problem has been generalized  in many   ways, for example, under  the potential  $\sum \frac{m_im_j}{r_{ij}^\alpha}$, or in higher dimensional Euclidean space. In particular, the \emph{curved n-body problem}, which generalizes the classical  n-body problem to surfaces of constant curvature has received lot of attentions in the last decade (cf \cite{BMK04, Dia13-1, Lim98, Sto18} and the references therein ).
 
 Motivated by those work, we study the generalization of the n-body problem to unit sphere of the Euclidean space. We assume that the potential depends on the shortest geodesic distance. We also assume that the potential is attractive (repulsive) in most  cases. We only specify the potential in the last section. 
 
  One major distinction between the generalization and the classical n-body problem is the existence of equilibrium configurations, due to 
 the compactness of  spheres.  This paper is devoted to the study of equilibrium configurations on spheres.

 In particular, we consider the equilibrium configurations formed by $N$ bodies on some $(N-2)$-dimensional sphere. We call them the \emph{Dziobek equilibrium configurations}.  In the classical n-body problem, Otto Dziobek \cite{Dzi1900} first introduced a set of equations  for non collinear four-body central configurations on $\R^2$, an approach proved fruitful in the study of four-body central configurations (cf \cite{AFS08,Moe01} and the references therein).

  

    We obtain a  criterion similar to that of Otto Dziobek  for the Dziobek  equilibrium configurations in the n-body problem on a sphere. If  the potential is attractive (repulsive), there is an  obstacle for the equilibrium configurations, namely, the particles could not lie on one hemisphere. By this property, we can further separate the criterion into two sets of equations. 
    One set of the equations can be used to determine the manifold in the configuration space that admits equilibrium configurations, then the other set of equations can be used to determine the corresponding masses.  

  The paper is organized as follows. In Section \ref{sec:basic}, we  discuss  the basic setting of the n-body problem  on a sphere and the equilibrium configurations. 
  In Section \ref{sec:de}, we define the  Dziobek equilibrium configurations and  obtain a  criterion.  Then we separate the criterion into two sets.  
  In Section \ref{sec:cnbp}, we turn to the \emph{curved  n-body problem} in $\S^3$. We apply the criterion to equilibrium configurations  of three- four- and five-body in $\S^3$ and discuss the stability of associated equilibria. We discuss the derivative of the Cayley-Menger determinant in the Appendix.

\section{the equilibrium configurations for mechanical system on a  sphere  }\label{sec:basic}
Let $\S^n$ be the unit sphere of the Euclidean space $\R^{n+1}$.  Let us consider $N$ points  of positive mass
$m_i$ on $\S^n$ that interacting mutually by a potential depending on the shortest geodesic distance between the points. The position vector of the $i$-th point is  $\q_i=(x_{i1}, ..., x_{i, n+1})^T$ with $x_{i1}^2+ ... + x_{i, n+1}^2=1$, $i=1, ..., N$. Denote  the configuration by $\q=(\q_1,..., \q_N)$. The configuration space is 
\[   Q= (\S^n)^N\setminus \{ {\rm collisons \ and \  configurations \ where \ the \ vector \ field \ is\ undefined  }  \}.  \] 
The mechanical system is given by the Lagrangian $L: TQ\to \R$
\begin{equation}\label{equ:Lag}  L(\q, \dot \q)= T_\q (\dot \q) -V(\q) \end{equation} 
where $T$ is a Riemannian metric on the configuration space and $V(\q)$ is the interaction potential. Denote the distance between two points $\q_i, \q_j$ by $d_{ij}$. Then $\cos d_{ij} =\q_i \cdot \q_j$.  Assume that the potential $V$ is 
\begin{equation}\label{equ:potential}
V(\q) = \sum_{1\le i<j\le N} m_im_jG(d_{ij}), 
\end{equation} 
where $G:(0,\pi)\to \R$ is some given smooth function. 
\begin{definition} [\cite{Sto18}]
A potential V as given by \eqref{equ:potential}  is  called attractive (repulsive) if the binary potential G, is such that $G'(x) > 0 (G'(x) < 0) $ for all $x \in(0, \pi)$.
\end{definition}

 The \emph{equilibrium motion}, or simply  \emph{equilibrium},  is solution in the form of  $\q(t)=\q(0)$. The configuration $\q(0)$, called a \emph{equilibrium configuration},  is a critical point of $V$. The derivative of $V$ is 
\begin{equation*}
\begin{split}
 \nabla_{\q_i}V(\q)&=\sum_{j=1,j\ne i}^N m_im_j G'(d_{ij})\nabla_{\q_i} d_{ij}=\sum_{j=1,j\ne i}^Nm_im_j G'(d_{ij})\nabla_{\q_i} \cos ^{-1}\q_i\cdot \q_j \\
 &= \sum_{j=1,j\ne i}^N \frac{-m_im_jG'(d_{ij})}{\sin d_{ij}}\nabla_{\q_i} \q_i\cdot \q_j. 
\end{split}
\end{equation*}   
By extending $\q_i \cdot \q_j$ into  a homogeneous function of degree zero in  $\R^{2(n+1)}\setminus\{0\}$, i.e.,  $\frac{\q_i}{\sqrt{\q_i\cdot \q_i}} \cdot \frac{\q_j}{\sqrt{\q_j\cdot \q_j}}$, we obtain 
  \begin{equation*}
 \nabla_{\q_i}V(\q)=\sum_{j=1,j\ne i}^N\frac{-m_im_jG'(d_{ij})}{\sin d_{ij}} [\q_j-\cos d_{ij}\q_i] \ \ i=1,..., N. 
 \end{equation*}
Hence, a configuration $\q\in Q$ is an equilibrium configuration  if $\q$ satisfies the following system 
   \begin{equation}\label{equ:SCCE}
 \sum_{j=1,j\ne i}^N\frac{m_im_jG'(d_{ij})}{\sin d_{ij}} [\q_j-\cos d_{ij}\q_i] =0,\  i=1,...,N.
   \end{equation}

\begin{remark}\label{rmk:stability}
	The Lyapunov stability of such equilibrium  is related with the second variation of $V$.  In particular, 
	the  well-known Lagrange-Dirichlet Theorem  says it is stable if the configuration is an isolated minimum. The converses of this theorem is widely discussed
	 (cf. \cite{Pala2020, Ure2020} and the references therein). If the potential is analytic, then it is unstable if it is not a minimum. 
The equilibrium configurations also lead to relative equilibria of the system \cite{Zhu19-1}. 
\end{remark}

\begin{proposition}
 The $i$-th equation of system \eqref{equ:SCCE} holds if and only if  there is a constant $\th_i$ such that 
 \begin{equation} \label{equ:SCCE1}
  \sum _{j\ne i, j=1}^N \frac{m_jm_i G'(d_{ij})\q_j}{\sin d_{ij}}+ \th_i\q_i=0.
  \end{equation}
 \end{proposition}
\begin{proof}
Assume that  equation \eqref{equ:SCCE1}  holds.  Multiply $\q_i$ to the both sides of equation \eqref{equ:SCCE1} .  Since $\q_i\cdot \q_j =  \cos d_{ij}$ and $\q_i\cdot \q_i = 1$,  we obtain $\th_i =- \sum _{j\ne i, j=1}^N \frac{m_jm_iG'(d_{ij}) \cos d_{ij}}{\sin d_{ij}}.$ Thus equation \eqref{equ:SCCE1}  is equivalent to the $i$-th equation of \eqref{equ:SCCE}. 
\end{proof}

The following result   generalizes one result of    Diacu  \cite{ Dia13-1} for the curved n-body problem, see Section \ref{sec:cnbp}. 
 
 \begin{theorem}\label{thm:nec_scc3}
 Assume that the potential is attractive (repulsive).   There is no equilibrium configuration for any positive masses  in any closed hemisphere of $\S^n$ (i.e. a hemisphere that contains its boundary ), as long as at least one body does not lie on the boundary.  
   \end{theorem}  
   
  \begin{proof}
  Let $\q$ be a  configuration that lies in a closed hemisphere of $\S^n$ and that there is at least one body not  on the boundary. Then there is some point $\v\in \S^n$ such that $\v \cdot \q_i \ge 0$ for all $i$ and at least one of them is strictly positive. Assume that $\v \cdot \q_1$ is the smallest. Then 
  $\nabla_{\q_1} V=0$ implies 
  \[   \sum_{j\ne 1}^N\frac{m_jG'(d_{1j})}{\sin d_{1j}} [\v \cdot \q_j-\cos d_{1j}\v\cdot \q_1] =0.   \]
  Since we have assumed $G'(d_{1j})$ is of the same sign for all $j$,  this is a contradiction.
  \end{proof}

We end this  section by several  examples of equilibrium configurations for equal masses.  The examples extend those constructed by  Diacu  in \cite{ Dia13-1} for the 
 curved n-body problem (Section \ref{sec:cnbp}).  Denote the standard bases of $\R^{n+1}$ by $e_1, ..., e_{n+1}$. Denote  the unit sphere in $span\{ e_1, ..., e_{k+1}\}$ by $\S^k$. We assume that the configurations constructed below are not those where $G'(d_{ij})$ is undefined. 
  
   \begin{example}[regular simplex with equal masses ]\label{exm:regular-simplex} 
   	Consider a regular $k$-simplex.  Place one  unit mass at each of the vertices.
   	The configuration obtained is an equilibrium configuration. It is enough to    check that 
   	 equation \eqref{equ:SCCE1}  holds for $i=1$.  Since
$d_{ij}=d_{12}$ for any pair of $\{i,  j\}$, we find that  
$$\sum _{i=2}^{k+1} \frac{m_i G'(d_{i1}) \q_i}{\sin d_{i1}}=\frac{ G'(d_{21})}{\sin d_{21}} (\sum _{i=2}^{k+1} \q_i)= -\frac{ G'(d_{21})}{\sin d_{21}} \q_1. $$ 
  \end{example}

  \begin{example} [regular polygon with equal masses]
  		Consider a regular $2k+1$-gon located on the unit circle of $span\{ e_1, e_2\}$.  Place one  unit mass at each of the vertices. By complex number notation, the position vectors are 
  		$\q_j = e^{i\phi j}$, $j=1, ..., 2k+1$, $\phi=\frac{2\pi}{2k+1}$. let us check equation \eqref{equ:SCCE1}   for $j=2k+1$. Since  $d_{2k+1, j}=d_{2k+1, 2k+1-j}$, we have 
  		\begin{align*}
 &\frac{m_j  G'(d_{2k+1,j}) e^{i\phi j}}{\sin d_{2k+1, j}}   +\frac{m_{2k+1-j} G'(d_{2k+1,2k+1-j}) e^{i\phi (2k+i-j)}}{\sin d_{2k+1, 2k+1-j}}   \\
&=2 \frac{G'(d_{2k+1,j}) }{\sin d_{2k+1, j}}  \cos j \phi e^{i\phi (2k+1)}.  
  		\end{align*}
Then it follows  that equation \eqref{equ:SCCE1} holds for $i=2k+1$, then for all $i$ by symmetry. 

Similarly, the regular polygon of even vertices  with equal masses is also an equilibrium configuration. 
  \end{example}

  \begin{example}[two regular polygons with equal  masses on  two complementary circles]
  	Consider one regular $n_1$-polygon located on the unit circle of   $span\{ e_1, e_2\}$ and another regular $n_2$-polygon located on the unit circle of  $span\{ e_3, e_4\}$.  Place a unit mass at each of the vertices.  Note that the distance between the particles from different polygons is always  $\frac{\pi}{2}$ since 
  	\[  \cos d_{ij}=  \q_i \cdot \q_j =0, \ \  1\le i\le n_1, \ n_1+1\le j\le n_1+n_2.  \]
  	Let us check 
  	that  equation \eqref{equ:SCCE1}  holds for any $1\le i\le n_1+n_2$, say $i=1$. 
  	\begin{align*}
\sum _{i=2}^{n_1+n_2} \frac{m_i G'(d_{i1})\q_i}{\sin^3 d_{i1}}=\sum _{i=2}^{n_1}\frac{ G'(d_{i1})}{\sin d_{i1}}\q_i+G'(\frac{\pi}{2}) \sum _{i=n_1+1}^{n_1+n_2}\q_i.
  	\end{align*}
  	The first part is  collinear with $\q_1$ by the above example,  and the second part is zero. Thus this configuration is one equilibrium configuration. 
  \end{example}

 \section{Dziobek Equilibrium Configurations }\label{sec:de}
  In this section, we consider equilibrium configurations   where $N$ masses span an $(N-2)$-sphere. We obtain a criterion, then separate it into  two sets of equations, the shape equations and the mass equations.
   In the classical n-body problem, a central  configuration of $N$ bodies that span an  $(N-2)$-dimensional  affine plane are called \emph{Dziobek central configurations} \cite{Dzi1900, Moe01}.  For equilibrium configurations on sphere, 
 equation \eqref{equ:SCCE1}  implies that the $N$ position vectors are always dependent, so  $1\le \rank (\q_1, ...,\q_N) \le N-1$.  
  
  \begin{definition}
  A Dziobek configuration of $N$  bodies on sphere  is one such that $ \rank (\q_1, ...,\q_N) = N-1$. 
  \end{definition}

  Let $\{ \q_1, \cdots, \q_N\}$ be a collection of vectors in $\R^{N-1}$. Assume the rank of these $N$ vectors is $N-1$. Consider the $(N-1)\times N$ matrix:
  \[ X=[\q_1,\cdots, \q_N ].  \] 
  Since the rank of $X$ is $N-1$,  $\dim \ker X=1$. The kernel can be found as follows.  Let $X_k$ be the $(N-1)\times (N-1)$ matrix obtained from $X$ by deleting the $k$-th column and let $|X_k|$ denote its determinant. 
  \begin{lemma}\label{lemma:ker}
  Let 
  \begin{equation}\label{equ:D}
  \D=(\D_1,...\D_N) =(  |X_1|, -|X_2|, ..., (-1)^{k+1}|X_k|,...).  
  \end{equation}
  Then $\D^T$ is the base of $\ker X$. In other words, $\D\ne 0$ and $\D_1 \q_1 +\cdots + \D_N \q_N=0$. 
  \end{lemma}
  
  \begin{proof}
  Assume that $\D_N=(-1)^{N+1} |X_N|\ne 0$. Consider the linear system in  $X_N u = \q_N$, $u=(u_1, ..., u_{N-1})^T$.  By Crammer's rule, we obtain
  $ u_k=  \frac{-\D_{k}}{\D_N}, k=1, ..., N-1$. 
  Then it  follows that $\D_1 \q_1 +\cdots + \D_N \q_N=0$. 
  \end{proof}
  
  \begin{proposition}\label{prop:Dine0}
  	Consider a Dziobek configuration of $N$ bodies on $\S^{N-2}$. Then the configuration is not on a hemisphere if and only if all $\D_i$ are of the same sign. 
  \end{proposition} 
  \begin{proof}
  	We only prove that if not all $\D_i$ are of the same sign the Dziobek configuration lies on a hemisphere. There are two cases. 
  	
  	If there is some $\D_i=0$, say $\D_1$, then $\rank\{ \q_2, ..., \q_N\}=N-2$. Let $\Pi$ be the hyperplane spanned by $\{ \q_2, ..., \q_N\}$ and $\vec{n}$ be the normal of $\Pi$ in $\R^{N-1}$ with the property $\vec{n}\cdot \q_1>0$. Then 
  	we have 
  	\[ \vec{n} \cdot \q_i\ge 0, \ i=1, ..., N,  \]
  	which implies that the Dziobek configuration lies on a hemisphere. 
  	
  	If all $\D_i$ are nonzero, there are two consecutive elements of $\D$ that are of different sign, say $\D_1>0, \D_2<0$. Then 
  	\[ |X_1|=\det(\q_2, \q_3, ..., \q_N)>0, \  |X_2|=\det(\q_1, \q_3, ..., \q_N)>0. \]
  	Let $\tilde \Pi$ be the $(N-2)$-dimensional hyperplane spanned by $\{ \q_3, ..., \q_N\}$ and $\vec{m}$ be the normal of $\tilde \Pi$ in $\R^{N-1}$ with the property $\vec{m}\cdot \q_1>0$. Assume that $\q_2=\lambda_1 \q_1 +\sum_{i=3}^N \lambda_i \q_i$. Then 
  	\[  |X_1|= \det(\lambda_1 \q_1 +\sum_{i=3}^N \lambda_i \q_i, \q_3, ..., \q_N )= \lambda_1 |X_2|.    \]
  	Then $\lambda_1>0$. Hence we have 
  	\[ \vec{m} \cdot \q_i\ge 0, \ i=1, ..., N,  \]
  	which implies that the Dziobek configuration lies on a hemisphere. 
  \end{proof}

  Denote  the quantity $\frac{G'(d_{ij})}{\sin d_{ij}}$ by $S_{ij}$. Then equation \eqref{equ:SCCE1}  becomes 
  \[  \sum_{j\ne i} m_j S_{ij} \q_j+ \th_i \q_i =0, \ 1\le i\le N. \]
 
  \begin{theorem} \label{thm:cri_sccn2}
  Assume that the potential is attractive (repulsive) and that  $\q=(\q_1, ..., \q_{N})$ is  a Dziobek configuration  in $\S^{N-2}$. Then the  configuration  $\q$ is an equilibrium configuration  if and only if there is a  nonzero real  number $p$ such that  
  \begin{equation} \label{equ:SCCE2}
  m_im_j S_{ij}=p \D_i \D_j \ {\rm for \ any\ } i\ne j.  
  \end{equation}
  \end{theorem}
  
  \begin{proof}
  The proof of the sufficient  conditions: 
  Since $\q$ is an equilibrium configuration,  equation \eqref{equ:SCCE1} holds. That is,  there is some nonzero real number $p_j$ such that
  \begin{equation}\label{equ:SCCE3}
(m_1 S_{1j}, m_2S_{2j},..., \th_j, ..., m_N S_{Nj}  )=p_j (\D_1, ..., \D_N)\ {\rm for\ all}\ j=1,..., N. 
  \end{equation}
  by Lemma \ref{lemma:ker}.  System \eqref{equ:SCCE3} is equivalent to 
  \[   S=\begin{bmatrix}
  p_1\\p_2\\\vdots\\p_N
  \end{bmatrix} ( \frac{\D_1}{m_1}, ..., \frac{\D_N}{m_N}), \ \    \   {\rm where }  \  S=\begin{bmatrix} 
  \th_1& S_{12}&\cdots&S_{1N}\\
  S_{21} & \th_2&\cdots&S_{2N}\\
  \vdots&\vdots&\ddots&\vdots\\
  S_{N1}&S_{N2}&\cdots&\th_N
  \end{bmatrix}  \]
 Since the left matrix $S$ is symmetric, we see that $p_j\frac{\D_1}{m_1}=p_1\frac{\D_j}{m_j}$, or,   by Proposition \ref{prop:Dine0}, 
  \[  (p_1m_1, ..., p_N m_N )= \frac{p_1m_1}{\D_1} (\D_1, ..., \D_N).   \]
 Let $M=\diag\{ m_1, ..., m_N\}$. We have 
 \[   MSM=\frac{p_1m_1}{\D_1} (\D_1, ..., \D_N)^T  (\D_1, ..., \D_N),  \]
  which gives \eqref{equ:SCCE2}.
  
The proof of the sufficient  conditions: Let $ (p_1m_1, ..., p_N m_N )=p\D$. The system \eqref{equ:SCCE2} implies system \eqref{equ:SCCE3}, so the condition is also sufficient.  
  \end{proof}

  The system \eqref{equ:SCCE2} can be obtained in another way, see Appendix. It  imply that  all $\D_i$ $(i\ge 1)$ are of the same sign, which agrees with Proposition \ref{prop:Dine0}. Eliminating the constant $p$, we get a system of   $\frac{N(N-1)}{2}-1$ equations from \eqref{equ:SCCE2}.  The system can be written in a form with the property that most of the equations are just constraints on the shapes, or, 
 independent of  the masses.


 \begin{proposition}
	Let $A=(a_{ij})$ be a symmetric matrix and $b=(b_1, ..., b_n)$. Assume that $A=b^T b$ and $b_1b_2 b_n\ne 0$. Consider the system consists of the  $\frac{n(n-1)}{2}$ equations 
	\[  a_{ij}=b_i b_j, \ i=1, ..., n-1, \ j=i+1, ..., n.  \]
	The system of equations is equivalent to 
	\begin{align}
	&a_{1n}=b_1b_n, \  b_k=b_n \frac{a_{1k}}{a_{1n}} \  k=2, ..., n-1; \label{equ:system-11}\\
	&b_1=b_n\frac{a_{12}}{a_{2n}}, \  a_{2k}a_{1n}=a_{1k} a_{2n}, \  k=3, ..., n-1;\label{equ:system-21}\\
	&a_{jn}a_{12}=a_{1j}a_{2n}, \  a_{jk}a_{1n}=a_{1k} a_{jn}, \  j =3, ..., n-2, \ k=j+1, ..., n-1;\label{equ:system-31}\\
	&a_{n-1,n}a_{12}=a_{1n-1}a_{2n}. \label{equ:system-41}
	\end{align} 
\end{proposition}

\begin{proof}
	From the first system, we see 
	$a_{ij}a_{kl}= b_ib_jb_kb_l= a_{ik}a_{jl}$ holds for any 4-tuple $\{i, j, k, l\}$ of $\{1, ..., n\}$. Then we derive the second system from the first one. 
	
	Let us derive the first system from the second one. For convenience, put the first system in an upper triangular form
	\[  E=\begin{bmatrix}
	a_{12}=b_1 b_2& a_{13}=b_1 b_3& \cdots & a_{1n}=b_1 b_n\\
	&a_{23}=b_2 b_3&  \cdots &a_{2n}=b_2 b_n\\
	& \ddots&\vdots&\vdots\\
	&& & a_{n-1,n}=b_{n-1} b_n
	\end{bmatrix} \]
	By \eqref{equ:system-11}, we can recover the first row of $E$. 
	By \eqref{equ:system-21} and the first row of $E$, we see 
	\[  a_{2n}=a_{12}  \frac{b_n}{b_1}=b_2 b_n, a_{2k}=\frac{a_{1k} a_{2n}}{a_{1n}}= \frac{b_kb_2 b_n}{b_n} =b_2b_k, k=3, ..., n-1.   \] 
	Hence the second row of $E$ is recovered. Similarly, the $j$-th row can be obtained by 
	\[ a_{jn}=a_{1j}  \frac{b_n}{b_1}=b_j b_n, a_{2k}=\frac{a_{1k} a_{jn}}{a_{1n}}= \frac{b_kb_j b_n}{b_n} =b_jb_k,  k=j+1, ..., n-1.  \]
	Thus, we obtain all equations of $E$.  This completes the proof.
\end{proof}
Applying the above result to the system \eqref{equ:SCCE2}, where $A=S$ and $b=\sqrt{p}\D M^{-1}$,   we get 
\begin{theorem}\label{thm:final}
 Assume  that the potential is attractive (repulsive) and that  $\q=(\q_1, ..., \q_{N})$ is  a Dziobek configuration  in $\S^{N-2}$ with masses $m_1, ..., m_N$. Then $\q$ is an equilibrium configuration  if and only if the following system of equations are satisfied
 \begin{equation}\label{equ:final}
	\begin{cases}
 m_2 = \frac{ S_{1N}\D_2}{S_{12}\D_N}m_N, \    ...,  m_{N-1} = \frac{S_{1N}\D_{N-1}}{S_{1, N-1}\D_N}m_N, 
m_1= \frac{ S_{2N}\D_1}{S_{12}\D_N}m_N,  \cr
 S_{1j} S_{2N}=S_{12}S_{jN}, j=3, ..., N-1;\cr
 S_{jk} S_{1N}=S_{1k}S_{jN}, k=j+1, ..., N-1, \  j=2, ..., N-2.
\end{cases}
 \end{equation}
\end{theorem}

  Note that 
 the first $N-1$ equations are involved with the masses, while the remaining equations are not. 
 Let us call the first $N-1$ equations the \emph{mass equations}, and the others  $\frac{N(N-3)}{2}$ equations the \emph{shape equations}. 

The system \eqref{equ:final} determines the Dziobek equilibrium configurations, including the configurations and the corresponding masses. The shape equations alone can not determine the configurations. Indeed,  there are configurations that satisfies 
 the shape equations, but the configuration lies on a hemisphere (see Remark \ref{rmk:example}  in Section \ref{sec:cnbp}). Thanks to  Proposition \ref{prop:Dine0}, 
 to build a Dziobek equilibrium configuration,  we  may first   construct a configuration that satisfies the shape equations and the condition that all $\D_i$ are of the same sign (or equivalently, configurations not on a hemisphere),  then determine the corresponding positive masses by the mass equations. 
 

\begin{remark}
 Recall that for Dziobek central configurations of    the classical  n-body problem \cite{Moe01}, one can only be certain that at least two elements of $\D$ are nonzero. Hence, we can get a system similar to \eqref{equ:final} there, which is   only necessary but not sufficient. 
 However, for $n=4$, we know that all $\D_i$ are nonzero, so the central configuration equations can be written in a form  similar to \eqref{equ:final}. The difference is that there are furthere restrictions on the mutual distances such that the masses are positive, see Corbera et al. \cite{CCR19}.  \end{remark}

\section{ Example: the curved N-body problem  in $\S^3$}\label{sec:cnbp}

 In this section we consider the problem in $\S^3$ with the gravitational interaction. The potential is defined as spherical-symmetric solutions of the Laplace equation on $\S^3$. This is the \emph{curved n-body problem} in $\S^3$. For more on this problem, see \cite{BMK04, Dia13-1, Zhu19-1}. For any two points $\q_i$ and $\q_j$, the binary potential and the potential are 
 \[  G(d_{ij})=  -m_i m_j \cot d_{ij}, \ V(\q)= \sum_{1\le i<j\le N} -m_i m_j \cot d_{ij} \ \]
respectively.   Since $G(x)=-\cot x$, $G'(x)=\frac{1}{\sin^2 x}>0$, the potential is attractive and $S_{ij}=\frac{G'(d_{ij})}{\sin d_{ij}}=\frac{1}{\sin^3 d_{ij}}$. 
 
 Note that the potential is undefined at $d_{ij}=\pi$, so we must exclude those configurations  with points diametrically opposite in the examples considered in Section \ref{sec:basic}, for  instance, the regular polygons with even vertices. Moreover, there is no equilibrium configuration  for  two masses. Otherwise, the equation \eqref{equ:SCCE1}  implies that $d_{12}$ is $0$ or $\pi$. The equilibrium configurations are also called \emph{special central configurations} in the curved n-body problem in $\S^3$ \cite{Zhu19-1}.


 \subsection{Criteria for Dziobek Equilibrium Configurations  of Three, Four and Five Bodies}
By  Theorem \ref{thm:final}, we obtain the following   criteria for Dziobek equilibrium configurations  of 3, 4 and 5 bodies respectively.  The regular 2, 3, and 4-simplex with equal masses (see Example \ref{exm:regular-simplex}) satisfies the following criteria respectively. 

  
  \begin{corollary}[$N=3$, $\S^1$] 
  Consider one configuration $\q=(\q_1, \q_2, \q_3)$ on $\S^1$.
  Then  $\q$ is a Dziobek equilibrium configuration if and only if   
  the  masses are 
   \[ (m_1, m_2, m_3)= \left( \frac{\sin^3 d_{12}\D_1}{\sin^3 d_{23}\D_3}, \frac{\sin^3 d_{12}\D_2}{\sin^3 d_{13}\D_3 },  1\right)m_3. \] 
  \end{corollary} 

 \begin{corollary}[$N=4$, $\S^2$] \label{cor: 4DSCC}
	Consider one  configuration $\q=(\q_1, \q_2, \q_3, \q_4)$ on $\S^2$.
	Then $\q$ is a Dziobek equilibrium configuration if and only if  
	$$\sin{d_{12}}\sin{d_{34}}=\sin{d_{13}}\sin{d_{24}}=\sin{d_{14}}\sin{d_{23}}$$
	and   the  masses are 
	\[\ (m_1,m_2,m_3,m_4)= \left( \frac{\sin^3 d_{12}\D_1}{\sin^3 d_{24}\D_4}, \frac{\sin^3 d_{12}\D_2}{\sin^3 d_{14}\D_4 }, \frac{\sin^3 d_{13}\D_3}{\sin^3 d_{14}\D_4 },  1 \right)m_4. \] 
\end{corollary}

\begin{corollary} [$N=5$, $\S^3$] \label{cor: 5DSCC}
	
	Consider  one configuration $\q=(\q_1, \q_2, \q_3, \q_4,\q_5)$ in $\S^3$.
	Then $\q$ is a Dziobek equilibrium configuration if and only if  
	\begin{align*}
	\sin{d_{13}}\sin{d_{25}}&=\sin{d_{12}}\sin{d_{35}}=\sin{d_{23}}\sin{d_{15}}, &\sin{d_{34}}\sin{d_{15}}=\sin{d_{14}}\sin{d_{35}},\\ \sin{d_{14}}\sin{d_{25}}&=\sin{d_{12}}\sin{d_{45}}
	=\sin{d_{24}}\sin{d_{15}},
	\end{align*}
	and the  masses are
	\[ (m_1, ..., m_5)= \left( \frac{\sin^3 d_{12}\D_1}{\sin^3 d_{25}\D_5}, \frac{\sin^3 d_{12}\D_2}{\sin^3 d_{15}\D_5 }, \frac{\sin^3 d_{13}\D_3}{\sin^3 d_{15}\D_5 }, \frac{\sin^3 d_{14}\D_4}{\sin^3 d_{15}\D_5 }, 1 \right)m_5. \] 
\end{corollary}

  \begin{figure}[!h]
  	\centering
  \includegraphics*[scale=0.8]{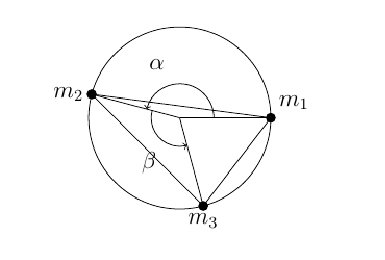}
    \caption{An acute triangle configuration}
    \label{fig:acute}
    \end{figure}
  
  For the case of $N=3$, the constraint on the shape is only that 
the configuration is   not  in one semicircle, in other words, $\vp_{i+1}-\vp_i<\pi$ for $i=1, 2, 3$ with 
 $\q_i=(\cos \vp_i, \sin\vp_i)$. Then all angles are acute, and the configuration  forms  an acute triangle, see Figure \ref{fig:acute}.   Let $d_{12}=\alpha, d_{23}=\b$. Then $d_{13}=2\pi-(\a +\b)$ and \begin{equation}\notag 
    0<\alpha<\pi,\ \ \ 0<\beta<\pi,\ \ \ \pi<\alpha+\beta<2\pi.
    \label{equ:con_scc_3}
    \end{equation} 
Note that  $\D_1=|\q_2, \q_3|=\sin d_{23}= \sin \b$, $\D_2=-|\q_1, \q_3|=\sin d_{13}=|\sin(\a+\b)|$, $\D_3=|\q_1, \q_2|=\sin d_{12}=\sin \a$. Thus the masses satisfy
  \begin{equation} \notag
  \frac{m_2}{\sin^2 \alpha}= \frac{m_3}{\sin^2 (\alpha +\beta)},\ \ \ \ \frac{m_1}{\sin^2 \alpha}= \frac{m_3}{\sin^2 \beta},\ \ \ \ 
     \frac{m_2}{\sin^2 \beta}= \frac{m_1}{\sin^2 (\alpha +\beta)}.
  \end{equation}
  
  The above system gives all Dziobek equilibrium configurations for three masses and it has been obtained by  direct computations in \cite{DSZ16}. 
The constraint of the masses is found as
\[  m_1^2m_2^2+ m_1^2m_3^2+m_2^2m_3^2 -2m_1m_2m_3<0, \] 
if we assume that $\sum_{i=1}^3m_i=1$. It is easy to see that all such configurations are local minima of the potential $V$ restricted on $\S^1$. These equilibria are stable on $\S^1$ (Remark \ref{rmk:stability}), see \cite{DSZ16} and the generalization in \cite{YZ19}.

   For the case of $N=4$,  the system is not trivial and an equivalent system has been obtained by direct computation in \cite{BDZ17}.  We do not know much besides the regular tetrahedron equilibrium configuration on $\S^2$ with  four equal masses. 
  Now we present a family of 4-body Dziobek equilibrium configurations   which contains the regular tetrahedron. 
  Consider a tetrahedron configuration of    four masses with position vectors  
   \begin{align*}
  & \q_1=(1, 0,0)^T\ & \q_2=(-c, r, 0)^T\\
  &\q_3=(-c,   -\frac{1}{2}r, \frac{\sqrt{3}}{2}r)^T \ &\q_4=(-c,   -\frac{1}{2}r, -\frac{\sqrt{3}}{2}r) ^T
     \end{align*}
  where $m_2=m_3=m_4$,  and 
$c\in (0,1)$, $r^2 +c^2 =1$,  see Figure \ref{fig:non-regu-tetra}.   Denote  such a configuration by $\q_c$. 
   \begin{figure}[!h]
         	\centering
     	\includegraphics[scale=0.8]{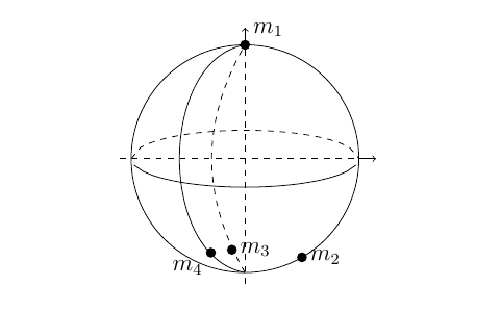}
        \caption{Configuration $\q_c$ on $\S^2$  }
          \label{fig:non-regu-tetra}  
          \end{figure}

  \begin{proposition}\label{pro:4body}
  The configuration $\q_c, c\in (0,1)$ is  a Dziobek equilibrium configuration if 
  \begin{equation}\label{equ:sym_4DSCC}
\frac{m_1}{m_4}= \frac{8\sqrt{3}c}{3(1+3c^2)^{\frac{3}{2}}}
  \end{equation}.
  \end{proposition}  

By numerical study, all such equilibrium configurations are not  minima of the potential $V$ restricted on $\S^2$. These equilibria  are unstable on $\S^2$, see Remark \ref{rmk:stability}.

\begin{proof}
	The tetrahedron is not on one hemisphere and the shape equations are satisfied since 
$d_{12}=d_{13}=d_{14}$, and $d_{23}=d_{24}=d_{34}$. The last two of the mass  equations $ \frac{m_i}{m_4}= \frac{\sin^3 d_{1i}\D_i}{\sin^3 d_{14}\D_4}, i=2,3$ are true since 
  since $\D_2=\D_3=\D_4$.  We only need to check the first mass equation. 
  
   Direct computation leads to 
\[\cos d_{12}=-c, \ \sin^3 d_{12}=r^3, \ \cos d_{24}=c^2 -\frac{1}{2}r^2, \ \sin^3 d_{24}=3\sqrt{3}r^3 ( \frac{1}{4}+\frac{3}{4}c^2) ^{\frac{3}{2}},  \]
and $\D_4= \frac{\sqrt{3}}{2} r^2,  \D_1= 3c \D_4$. Thus the configuration is a Dziobek equilibrium configuration  if and only if 
\[ \frac{m_1}{m_4}=  \frac{\sin^3 d_{12}\D_1}{\sin^3 d_{24}\D_4}=\frac{r^3  3c\D_4}{3\sqrt{3}r^3 ( \frac{1}{4}+\frac{3}{4}c^2) ^{\frac{3}{2}}\D_4}=\frac{8\sqrt{3}c}{3(1+3c^2)^{\frac{3}{2}}}.  \]  
\end{proof}
  \begin{remark}\label{rmk:example}
 	Consider the configuration with  $c=-\frac{1}{3}$. Then $\sin d_{12}=\sin d_{24}$,   so the shape equations are satisfied. However, the  configuration is on the north hemisphere. 
 \end{remark}

 As $c\to 0$, we have $\frac{m_1}{m_4}\to 0$. This is intuitively clear.  As $c\to 0$, the three masses $m_2, m_3, m_4$ tend to form an equilibrium configuration of their own on the equator. Then we may place an infinitesimal mass at $\pm(1,0,0)$ to form an equilibrium configuration of 4 bodies. The function $f(c)=\frac{8\sqrt{3}c}{3(1+3c^2)^{\frac{3}{2}}}, c\in (0,1), $ is increasing on $(0, \frac{\sqrt{6}}{6})$ and decreasing on $( \frac{\sqrt{6}}{6},1)$. The maximum is $\frac{16}{9\sqrt{3}}>1$,  $\lim_{c\to0} f(c)=0$  and $\lim_{c\to1} f(c)=\frac{\sqrt{3}}{3}$.    
  
 \begin{corollary}\label{cro:4body}
 Consider four masses  $(\bar m, m, m, m)$ on $\S^2$.  If $\frac{\bar m}{m}\in (0, \frac{16}{9\sqrt{3}}]$, then there is at least one Dziobek equilibrium configuration.  If $\frac{\bar m}{m}\in (\frac{\sqrt{3}}{3}, \frac{16}{9\sqrt{3}})$, then there are at least two Dziobek equilibrium configurations. Especially, there are at least two equilibrium configurations  for four equal masses. 
 \end{corollary}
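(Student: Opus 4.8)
The plan is to read the corollary off from the Proposition together with the elementary analysis of $f(c)=\frac{8\sqrt{3}c}{3(1+3c^2)^{3/2}}$ on $(0,1)$ carried out just above. By the Proposition, for a mass vector of the form $\m=(\bar m,m,m,m)$ the configuration $\q_c$ is a special central configuration precisely when $f(c)=\bar m/m$; the hemisphere hypothesis of Theorem \ref{thm: 4DSCC} is automatic in this situation, since for positive masses the M-equations force all $\D_i$ to share the sign of $\D_1$, which, as explained at the end of Section \ref{sec:dscc}, is exactly the statement that $\q_c$ does not lie in a closed hemisphere. So the task reduces to counting the solutions $c\in(0,1)$ of $f(c)=\bar m/m$.

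First I would record the behavior of $f$ on $(0,1)$: it increases on $(0,\frac{\sqrt{6}}{6})$ from the limit $0$ to the maximum $\frac{16}{9\sqrt{3}}$ attained at $c=\frac{\sqrt{6}}{6}$, and decreases on $(\frac{\sqrt{6}}{6},1)$ from $\frac{16}{9\sqrt{3}}$ down to the limit $\frac{\sqrt{3}}{3}$. Applying the intermediate value theorem to the increasing branch, for every $t\in(0,\frac{16}{9\sqrt{3}}]$ there is at least one $c_1$ with $f(c_1)=t$ (with $c_1\in(0,\frac{\sqrt{6}}{6})$ when $t<\frac{16}{9\sqrt{3}}$, and $c_1=\frac{\sqrt{6}}{6}$ when $t=\frac{16}{9\sqrt{3}}$); the corresponding $\q_{c_1}$ provides the asserted single special central configuration whenever $\bar m/m\in(0,\frac{16}{9\sqrt{3}}]$.

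Next, when $t\in(\frac{\sqrt{3}}{3},\frac{16}{9\sqrt{3}})$ I would use the intermediate value theorem a second time, now on the decreasing branch $(\frac{\sqrt{6}}{6},1)$, where $f$ assumes every value in $(\frac{\sqrt{3}}{3},\frac{16}{9\sqrt{3}})$, producing $c_2\in(\frac{\sqrt{6}}{6},1)$ with $f(c_2)=t$. Since $c_1<\frac{\sqrt{6}}{6}<c_2$, the two values differ, and because $\cos d_{12}(\q_c)=-c$ the configurations $\q_{c_1}$ and $\q_{c_2}$ have different mutual distances, hence are inequivalent under $SO(4)$; this gives the two special central configurations. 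For four equal masses one has $t=1$, and since $\frac{\sqrt{3}}{3}<1<\frac{16}{9\sqrt{3}}$ the previous argument applies and yields at least two; moreover $c_1=\frac{1}{3}$ (one checks $f(\frac{1}{3})=1$) recovers the regular tetrahedron of Example \ref{exm:scc_45}, while $c_2>\frac{\sqrt{6}}{6}$ is a genuinely new, non-regular configuration.

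There is no real obstacle here: the whole argument is a continuity-and-monotonicity reading of the Proposition. The only points deserving a little care are the endpoint bookkeeping — that the maximum $\frac{16}{9\sqrt{3}}$ is actually attained (legitimizing the closed right endpoint of the first claim) while the value $\frac{\sqrt{3}}{3}$ is only a limit on the decreasing branch (so the open left endpoint of the second claim is sharp for this particular family) — and the trivial numerical check $\frac{\sqrt{3}}{3}<1<\frac{16}{9\sqrt{3}}$ used for the equal-mass statement. The sole computational ingredient, the monotonicity of $f$, has already been supplied above.
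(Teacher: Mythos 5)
Your proposal is correct and follows essentially the same route as the paper: the corollary is read off from the Proposition together with the monotonicity analysis of $f(c)=\frac{8\sqrt{3}c}{3(1+3c^2)^{3/2}}$, with the intermediate value theorem applied on the increasing and decreasing branches and the check $\frac{\sqrt{3}}{3}<1<\frac{16}{9\sqrt{3}}$ for equal masses. Your added remarks (the hemisphere condition being automatic, distinguishing $\q_{c_1}$ and $\q_{c_2}$ via $\cos d_{12}=-c$, and $f(\tfrac13)=1$ recovering the regular tetrahedron) are accurate refinements of the same argument.
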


 For the case of $N=5$,  the system is not trivial and an equivalent system  has been obtained by direct computation in \cite{BDZ17}.  We do not know much besides  the regular pentatope  equilibrium configuration on $\S^3$ with  five equal masses. Nevertheless, it is easy to construct  a family of 5-body Dziobek equilibrium configurations  similar to the 4-body equilibrium configurations  constructed above and obtain conclusions similar to Proposition \ref{pro:4body} and Corollary \ref{cro:4body}.

\subsection{Another Example}

Consider Dziobek equilibrium configurations of $N$ masses with the property that $\sum_{i=1}^N m_i \q_i =0$.  	By Lemma \ref{lemma:ker}, the vector $(m_1, ..., m_N)$ is a multiple of $(\D_1, ..., \D_N)$. Then equations  of \eqref{equ:SCCE2} implies that  $\sin d_{ij}$  is a constant for  all pairs of  all $\{i, j\}$. Thus, there is some $c\in(0,\pi)$ such that $d_{ij}=c$, or $\pi-c$.

If  all $d_{ij}$ equal to $c$, thus the configuration is a regular simplex, which implies  that $\D_1=\D_2=...=\D_N$ and  $m_1=m_2=...m_N$.  For instance, on $\S^1$,  this is the only possibility. However, this is not the only case if the sphere is of higher dimension. A similar phenomenon happens in \cite{DZ20}. 

For example,  consider the following Dziobek configuration on $\S^2$ with position vectors 
 \begin{align*}
& \q_1=(a, b,0)^T\ & \q_2=(-c, r, 0)^T\\
&\q_3=(-c,   -\frac{1}{2}r, \frac{\sqrt{3}}{2}r)^T \ &\q_4=(-c,   -\frac{1}{2}r, -\frac{\sqrt{3}}{2}r) ^T
\end{align*}
where 
$a, b, c\in (0,1)$, $r^2 +c^2 =1, a^2 +b^2 =1$.  We show that there are values of $a, c$ such that $\sin d_{ij}$ is a constant for all pairs of $\{i,j\}$. Since the configuration is not on one hemisphere, this configuration leads to a Dziobek equilibrium configuration with the property $\sum \m_i \q_i=0$ but not a regular simplex.  
Indeed, we only need to solve 
\[  \q_1\cdot \q_2=\q_2\cdot \q_3, \  \q_1\cdot \q_2=-\q_1\cdot \q_3.  \] 
In coordinates, the system is 
\[  4ac=\sqrt{(1-a^2)(1-c^2)}, \ 3c^2-1=6ac.  \]
\begin{figure}[!h]
	\centering
	\includegraphics[scale=0.4]{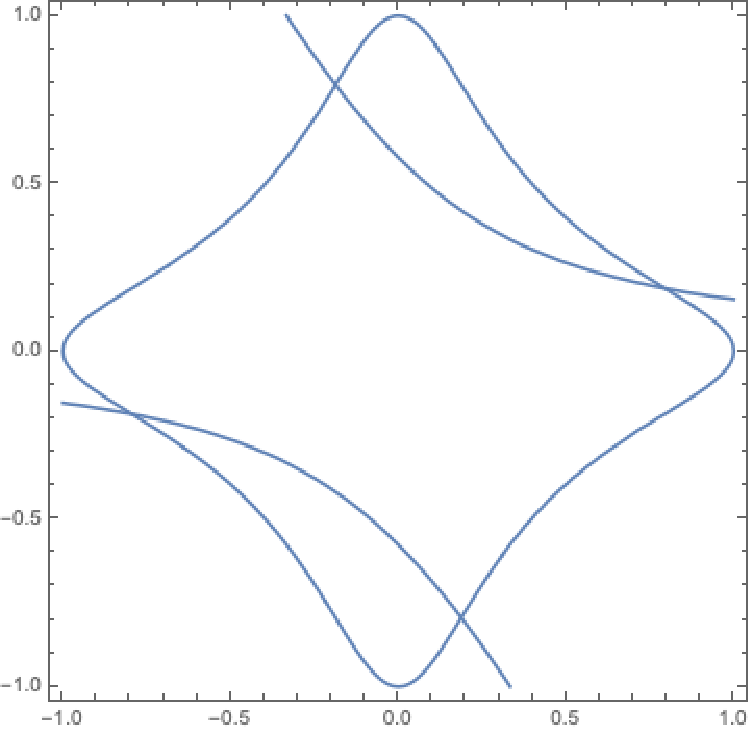}
	\label{fig:intersection}  
\end{figure}    
The two algebraic curves defined by the equations has one  intersection in $(0,1)\times (0,1)$.
Thus, there is Dziobek  equilibrium configuration on $\S^2$ that is not regular simplex but satisfies $\sum m_i\q_i=0$.


 \section*{Appendix: The derivative of the 
 	Cayley-Menger determinant}

 For a Dziobek configuration of $n$-body in $\S^{n-2}$, recall the $(n-1)\times n$ matrix  $X=[\q_1, ..., \q_n]$.  Since $\rank X =n-1$, the corresponding Gram matrix $X^TX$ has rank $n-1$. Then the determinant $F=0$. We may call the quantity  $F$  the \emph{spherical Cayley-Menger determinant}, \cite{Berger}.  For instance, for $n=4$, 
 \[   F=\begin{vmatrix}
 1 & \cos d_{12} & \cos d_{13} & \cos d_{14} \\
\cos d_{12} & 1 & \cos d_{23} & \cos d_{24} \\
\cos d_{13} & \cos d_{23} & 1 & \cos d_{34} \\
\cos d_{14} & \cos d_{24} & \cos d_{34} & 1 \\
 \end{vmatrix}.  \]  
 
 A by-product of equation \eqref{equ:SCCE2} is the following. A Dziobek configuration on $\S^{n-2}$ can be parametrized by  the $C_n^2$ quantities $\{\cos d_{12}, ..., \cos d_{n-1,n}\}$ with the relation $F=0$. Then any equilibrium configuration of the system \eqref{equ:Lag}  is the critical point of $V+\lambda F$.  
 Then equation \eqref{equ:SCCE2}  implies 
$\frac{\partial F}{\partial \cos d_{ij}}=\alpha \D_i\D_j$ for some $\alpha$. 

\begin{proposition}
	Let $\q_1, ..., \q_n$ be a Dziobek configuration in $\S^{n-2}$. Let $d_{12}, ..., d_{n-1, n}$ and $F$ be the corresponding mutual distances and the spherical Cayley-Menger determinant. Then we have 
	\[  \frac{\partial F}{\partial \cos d_{ij}}=2 \D_i\D_j \  {\rm for\ any}\ 1\le i<j\le n.  \]
where $\D_i$ is the signed determinant defined in \eqref{equ:D}. 
\end{proposition}

\begin{proof}
	By the symmetry of $X^TX$, we have $\frac{\partial F}{\partial \cos d_{ij}}=2 F_{ij}$, with
	$F_{ij}$ being the $(i, j)$ cofactor of matrix $X^TX$, i.e., 
	\[ \frac{\partial F}{\partial \cos d_{ij}}=2 (-1)^i (-1)^j  |A_{ij}|,  \]
	 where $A_{ij}$ is the  $(i, j)$ minor  of matrix $X^TX$. Let $X_k$ be the square matrix of order $n-1$ obtained from $X$ by deleting the $k$-th column. Then   $X_i^T X_j=A_{ij}$. 
	Thus, we have  $\frac{\partial F}{\partial \cos d_{ij}}=2 \D_i\D_j$. 
\end{proof}

This derivative formula  enables us to obtain equation \eqref{equ:SCCE2} directly.

For  a Dziobek configuration $\x=(\x_1, ..., \x_n )$ in $\R^{n-2}$, the mutual distances satisfy a relation and its  derivative formula is similar to  the above one.
Due to the translational symmetry, the appropriate Gram matrix is $\tilde X^T\tilde X$, with 
\[  \tilde X=[\x_2-\x_1, ..., \x_n-\x_1].  \]
 It is easy to see that $|\tilde X^T \tilde X|=0$. 
 Note that the entries of  $X^T\tilde X$ are not in terms of  the mutual distances. By using the formula $(\x_i-\x_1) \cdot (\x_j-\x_1)=\frac{1}{2}(d_{1i}^2 +d_{1j}^2 - d_{ij}^2)$ and some bordering technique, \cite{Berger}, we can obtain another determinant 
\[  \Gamma =\begin{vmatrix}
0&1&1&\cdots&1\\
1&0&d_{12}^2&\cdots&d_{1n}^2\\
1&d_{21}^2&0&\cdots&d_{2n}^2\\
\vdots&\vdots&\vdots&\ddots&\vdots&\\
1&d_{n1}^2&d_{n2}^2&\cdots&0
\end{vmatrix},  \ {\rm and}\  \Gamma=(-1)^n2^{n-1}|\tilde X^T\tilde X|. \]
Usually, it is $\Gamma$ instead of $|\tilde X^T\tilde X|$ that is called the \emph{Caylay-Menger determinant}.  
Let
\[ X=\begin{bmatrix}
1 & 1& \cdots & 1\\
\x_1&\x_2&\cdots&\x_n
\end{bmatrix} _{(n-1)\times n},  \]
and $X_k$ be the   square matrix of order $n-1$ obtained from $X$ by deleting the $k$-th column. Let $\D_k =(-1)^{k-1}|X_k|$. For $n=4$, Dziobek \cite{Dzi1900} observed  a formula that is  equivalent to 
\[  \frac{\partial \Gamma}{\partial d_{ij}^2}=-8 \D_i\D_j \  {\rm for\ any}\ 1\le i<j\le 4. \]
With the  technique used to relate $\Gamma$ and $|\tilde X^T \tilde X|$,  we have 
\begin{proposition}
	Let $\x_1, ..., \x_n$ be a Dziobek configuration in $\R^{n-2}$. Let $d_{12}, ..., d_{n-1, n}$  be the corresponding mutual distances. Let $\Gamma$ and $\D_i$ be the determinants defined above. Then we have 
	\[  \frac{\partial \Gamma}{\partial  d_{ij}^2}= (-2)^{n-1} \D_i\D_j \  {\rm for\ any}\ 1\le i<j\le n.  \]
\end{proposition}

\begin{proof}
	By the symmetry, we have $\frac{\partial \Gamma}{\partial d_{ij}^2}=2 (-1)^i (-1)^j  |B_{ij}|$，  
	where $B_{ij}$ is the  $(i+1, j+1)$ minor  of $\Gamma$. On the other hand,  note that 
	\[ | X_i|=\begin{vmatrix}
	1&0&0&\cdots &0\\
	0&1&1&\cdots &1\\
	\vec{0}&\x_1&\x_2&\cdots&\x_n
	\end{vmatrix} =- \begin{vmatrix}
	0&1&1&\cdots &1\\
	1&0&0&\cdots &0\\
	\vec{0}&\x_1&\x_2&\cdots&\x_n
	\end{vmatrix}.  \]
	Bordering $X_j$ in the same way without exchanging the first two row, we obtain 
	\[ |X_i^TX_j|=  - \begin{vmatrix} 0&1&\cdots &1&\cdots&1&1&\cdots&1\\
	1&\x_1 \cdot \x_1&\cdots&\x_1 \cdot \x_i&\cdots&\x_1 \cdot \x_{j-1}&\x_1 \cdot \x_{j+1}&\cdots&\x_1 \cdot \x_{n}\\
	\vdots&\vdots&\ddots&\vdots&\ddots&\vdots &\vdots&\ddots&\vdots&\\
		1&\x_{i-1} \cdot \x_1&\cdots&\x_{i-1} \cdot \x_i&\cdots&\x_{i-1}  \cdot \x_{j-1}&\x_{i-1}  \cdot \x_{j+1}&\cdots&\x_{i-1} \cdot \x_{n}\\
	1&\x_{i+1}  \cdot \x_1&\cdots&\x_{i+1}  \cdot \x_i&\cdots&\x_{i+1}  \cdot \x_{j-1}&\x_{i+1}  \cdot \x_{j+1}&\cdots&\x_{i+1}  \cdot \x_{n}\\
			\vdots&\vdots&\ddots&\vdots&\ddots&\vdots &\vdots&\ddots&\vdots&\\
	1&\x_j \cdot \x_1&\cdots&\x_j\cdot \x_i&\cdots&\x_j \cdot \x_{j-1}&\x_j \cdot \x_{j+1}&\cdots&\x_j \cdot \x_{n}\\
			\vdots&\vdots&\ddots&\vdots&\ddots&\vdots &\vdots&\ddots&\vdots&\\
				1&\x_n \cdot \x_1&\cdots&\x_n \cdot \x_i&\cdots&\x_n \cdot \x_{j-1}&\x_n \cdot \x_{j+1}&\cdots&\x_n \cdot \x_{n}
	\end{vmatrix} \]
	We then replace $\x_i\cdot \x_j$ be $\frac{1}{2}(||\x_i||^2 +||\x_j||^2 - d_{ij}^2)$, and eliminate all the $||\x_i||^2$ by subtracting the appropriate multiple of the first row and column from the others. We obtain 
	\[  \D_i\D_j= (-1)^{i+j}|X_i^T X_j| =(-1)^{i+j}  2^{2-n}   (-1)^{n-1}|B_{ij}|.   \]
	Hence follows the formula $\frac{\partial \Gamma}{\partial  d_{ij}^2}= (-2)^{n-1} \D_i\D_j$. 
\end{proof}

Central configuration  in $\R^n$ of dimension $n-2$ are considered in \cite{Moe01}. The  equations of them are derived by  vectorial method there. Note that these equations follows easily from the above derivative formula.


\end{document}